\numberwithin{equation}{section}
\newtheorem{theorem}{Theorem}[section]
\newtheorem{corollary}[theorem]{Corollary}
\theoremstyle{definition}
\title[Finite local approximations of isometric actions]{On finite local approximations of isometric actions \\ of residually finite groups}
\author{Vadim Alekseev}
\address{Vadim Alekseev, TU Dresden, 01062 Dresden, Germany}
\email{vadim.alekseev@tu-dresden.de}
\author{Andreas Thom}
\address{Andreas Thom, TU Dresden, 01062 Dresden, Germany}
\email{andreas.thom@tu-dresden.de}
\date{\today}
\begin{document}

\begin{abstract}
We show that any isometric action of a residually finite group admits approximate local finite models. As a consequence, if $G$ is residually finite, every isometric $G$-action embeds isometrically into a metric ultraproduct of finite isometric $G$-actions.
\end{abstract}

\maketitle

\section{Introduction}

Let $G$ be a group and $\pi \colon F \to G$ be a surjection from a free group. A sequence of homomorphisms $\varphi_n \colon F \to {\rm Sym}(k_n)$ is a sofic approximation of $G$ if 
\[
\lim_{n \to \infty} \ell_{k_n}(\varphi_n(w))  =
\begin{cases}
0 &\text{if } \pi(w)=1_{G},\\
1 &\text{otherwise,}
\end{cases}
\]
where $\ell_{k_n}(\sigma):=k_n^{-1} |\{i \mid \sigma(i) \neq i\}|$ for $\sigma \in {\rm Sym}(k_n)$ is the normalized Hamming length, see \cite{PestovBriefGuide} for details. A sofic approximation gives rise to a sequence of isometric actions $F \times F \curvearrowright ({\rm Sym}(k_n),d_{k_n})$ by left-right multiplication, where $1_{k_n} \in {\rm Sym}(k_n)$ satisfies
\[
\lim_{n \to \infty} d_{k_n}(\psi_{(g,h)}(1_{k_n}),1_{k_n})  =
\begin{cases}
0 &\text{if } \pi(g)= \pi(h),\\
1 &\text{otherwise,}
\end{cases}
\]
where $d_{k_n}(\sigma,\tau):=\ell_{k_n}(\sigma^{-1}\tau)$ is the normalized Hamming metric. Thus, we may consider this sequence as a local finite approximation of the canonical isometric left-right action $F \times F \curvearrowright (G,\delta)$ with the $0$-$1$-valued metric $\delta.$ One motivation of this article was to clarify whether the mere existence of such a local approximation by finite isometric actions was already close to being a characterization of soficity of $G.$ To us, this seemed not impossible, since it is for example well-known that the associated unitary representation of $F \times F$ on $\ell^2 G$ is weakly contained in finite-dimensional unitary representations of $F \times F$ if and only if the group $G$ is hyperlinear, see \cite[Theorem 6.2.7, Exercise 6.2.4]{BrownOzawa}.
However, contrary to our expectation, we prove that this is not the case and every isometric action of a residually finite group admits approximate local finite models. 
In particular, such models of the left–right action $F \times F \curvearrowright (G,\delta)$ exist for all $\pi \colon F \to G$, independently of soficity of $G$. It remains an interesting challenge to put additional conditions on the approximating finite metric spaces that do imply soficity of the group $G$. A characterization of sofic groups in close analogy to Kirchberg's work and along similar lines is currently work in progress \cite{alekthom}.

\medskip

Our main result is the following theorem:
\begin{theorem} \label{thm:main}
Let $G$ be a residually finite group and let $G \curvearrowright_{\varphi} (X,d)$ be an isometric action. For all $X_0 \subset X$ finite, $A \subset G$ finite and $\varepsilon>0$, there exists a finite metric space $(Y,\eta)$ with an isometric action $G \curvearrowright_\psi (Y,\eta)$ and a map $f \colon X_0 \to Y$, such that
\[
\bigl|d(\varphi_g(x),\varphi_h(y)) - \eta(\psi_g(f(x)),\psi_h(f(y)))\bigr|\leq \varepsilon,
\quad \forall g,h \in A, \ x,y\in X_0.
\]
\end{theorem}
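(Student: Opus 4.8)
The plan is to build $Y$ as a finite metric space lying over a finite quotient of $G$, with the metric produced as a shortest-path pseudometric in a suitably weighted graph encoding the "local" part of the action. First some harmless reductions: since enlarging $A$ only strengthens the conclusion, we may assume $e\in A$ and $A=A^{-1}$. Because any isometric action satisfies $\eta(\psi_g f(x),\psi_h f(y))=\eta(f(x),\psi_{g^{-1}h}f(y))$, and likewise $d(\varphi_g x,\varphi_h y)=d(x,\varphi_{g^{-1}h}y)$, it suffices to produce $(Y,\eta)$, $\psi$ and $f$ with $\bigl|\eta(f(x),\psi_k f(y))-d(x,\varphi_k y)\bigr|\le\varepsilon$ for all $k\in K:=A^{-1}A$ and $x,y\in X_0$. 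It is also convenient (though not essential) to first replace $(X,d)$ by the $G$-invariant subspace $\varphi_G(X_0)$, and one may keep in mind the equivariant Kuratowski embedding $x\mapsto d(x,\cdot)$ of $X$ into $\ell^\infty(X)$ (with $G$ acting by translating the argument), which realises the action by linear isometries of a Banach space; we will not strictly need this, but it clarifies why distances among points of $\varphi_K(X_0)$ are witnessed inside $\varphi_K(X_0)$.

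Now the construction. Using residual finiteness, fix a finite-index normal subgroup $N\trianglelefteq G$ (to be chosen last), write $\bar G=G/N$, $q\colon G\to\bar G$, and assume $q$ is injective on a finite set $B\subseteq G$ specified below (with $K\subseteq B$). Let $Y_0:=\bar G\times X_0$ carry the $G$-action $g\cdot(\bar c,x):=(q(g)\bar c,x)$, and make $Y_0$ a weighted graph by joining $(\bar c,x)$ to $(\bar c\,q(k),y)$ with an edge of weight $d(x,\varphi_k y)$ for all $\bar c\in\bar G$, $k\in K$, $x,y\in X_0$; this is well defined because $q$ is injective on $K$, and the edge set together with the weights is $G$-invariant. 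Let $\eta$ be the induced shortest-path pseudometric on $Y_0$; it is a $G$-invariant pseudometric, and passing to the metric quotient $Y:=Y_0/\{\eta=0\}$ yields a finite metric space with an isometric $G$-action $\psi$. Put $f(x):=[(\bar e,x)]$. The direct edge between $(\bar e,x)$ and $(q(k),y)$ gives at once the upper bound $\eta(f(x),\psi_k f(y))\le d(x,\varphi_k y)$ for $k\in K$.

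The content is the matching lower bound $\eta(f(x),\psi_k f(y))\ge d(x,\varphi_k y)-\varepsilon$. A path in $Y_0$ from $(\bar e,x)$ to $(q(k),y)$ is recorded by elements $k_1,\dots,k_m\in K$ and points $x=x_0,x_1,\dots,x_m=y$ in $X_0$ with $q(k_1\cdots k_m)=q(k)$, and applying the isometries $\varphi_{k_1\cdots k_{l-1}}$ and the triangle inequality shows its total weight is at least $d\bigl(x,\varphi_w y\bigr)$, where $w:=k_1\cdots k_m\equiv k\pmod N$. So everything reduces to ensuring $d(x,\varphi_w y)\ge d(x,\varphi_k y)-\varepsilon$ for the group elements $w$ that can actually arise from cheap paths. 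Here the key observation is that every edge of \emph{positive} weight has weight at least $c:=\min\{d(x,\varphi_k y)>0:x,y\in X_0,\ k\in K\}>0$, a constant depending only on $X_0$ and $A$; hence a path of weight below $R:=\max_{x,y\in X_0,k\in K}d(x,\varphi_k y)$ uses at most $R/c$ positive-weight edges, and after deleting loops in $Y_0$ the zero-weight edges can be organised into a bounded number of bounded "stabiliser/groupoid" runs. The upshot we want is that such a $w$ lies in a \emph{finite} set $B=B(X_0,A,\varepsilon)\subseteq G$ that does \emph{not} depend on $N$; then choosing $N$ with $q$ injective on $B$ forces $w=k$, so the path weight is at least $d(x,\varphi_k y)$, and for paths of weight $\ge R$ there is nothing to prove. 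Combining the two bounds gives $\eta(f(x),\psi_k f(y))=d(x,\varphi_k y)$ for $k\in K$, and unwinding the first paragraph yields the theorem.

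The main obstacle is precisely the boxed step in the previous paragraph: controlling the zero-weight ("stabiliser", resp. action-groupoid) portions of a geodesic so that the represented group element $w$ stays in a set independent of $[G:N]$. When the action is free on the relevant orbits there are no nontrivial zero-weight edges and the argument is immediate (e.g. for $\mathbb Z\curvearrowright\mathbb R$ one recovers the cycle graph $\mathbb Z/2k\mathbb Z$); in general one must exploit the finiteness of $X_0$ together with a loop-removal argument inside $Y_0$, and arrange $N$ (again by residual finiteness) to be deep enough that it acts either trivially or "far away" on $X_0$ through the finitely many bounded words that can occur. Making this last point precise, and in particular ruling out the possibility that cheap paths exploit accidental near-coincidences created by passing to $\bar G$, is where the real work lies; everything else is bookkeeping with the shortest-path pseudometric and with residual finiteness.
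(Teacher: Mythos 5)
Your construction is the same skeleton as the paper's (finite quotient injective on a fixed finite set, weighted graph on $\bar G\times X_0$ with path metric, lower bound via left-invariance and the triangle inequality), but the step you yourself flag as "where the real work lies" is a genuine gap, and it is exactly the point where the paper inserts its one essential idea, which your write-up is missing. With edge weights $d(x,\varphi_k y)$ there is no way to bound the number of zero-weight edges in a cheap path, hence no way to confine the represented element $w$ to a finite set $B$ independent of the quotient; the set of elements realizable by zero-weight paths (stabilizer-type elements and their products through $X_0$) can be an infinite subset of $G$, so no injectivity requirement on a finite set, and no "depth" of $N$ obtainable from residual finiteness alone, rules out shortcuts. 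This is not merely an expository issue: take $G=\mathbb Z^2=\langle a,b\rangle$ acting on $\mathbb R$ with $a$ translating by $1$ and $b$ acting trivially, $X_0=\{0\}$, $A=\{e,a^{\pm1},b^{\pm1}\}$. For $N=\langle (1,j_0),(0,M)\rangle$ with $j_0$ and $M\gg j_0$ large, $q$ is injective on any prescribed box $B$, yet $q(b^{-j_0})=q(a)$, so the chain of $j_0$ zero-weight $b^{-1}$-edges gives $\eta(f(0),\psi_a f(0))=0$ while $d(0,\varphi_a 0)=1$. So your construction as stated can fail for admissible choices of $N$, and the loop-removal/"stabiliser run" sketch does not repair it, because the problematic words are unbounded and the quotient can identify them with short elements.

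The paper's fix is cheap and you should adopt it: before building the graph, replace the pseudometric $\kappa((g,x),(h,y)):=d(\varphi_g(x),\varphi_h(y))$ on $G\times X_0$ by $d_\varepsilon:=\kappa+\varepsilon\, d_{\rm st}$, i.e.\ add $\varepsilon$ to every distance between distinct points (equivalently, give every nontrivial edge weight $d(x,\varphi_k y)+\varepsilon$). Then every edge between distinct vertices costs at least $\varepsilon$, so any path cheaper than $m:=\max_{g,h\in A,\,x,y\in X_0} d_\varepsilon((g,x),(h,y))$ has at most $m/\varepsilon$ edges, forcing $w\in A^{2\lceil m/\varepsilon\rceil}$ --- a finite set fixed in advance, independent of the quotient. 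Choosing $\pi$ injective there gives $w=k$ exactly, and then left-invariance of $d_\varepsilon$ plus its triangle inequality yields $\eta(\psi_g f(x),\psi_h f(y))=d_\varepsilon((g,x),(h,y))$, which differs from $d(\varphi_g(x),\varphi_h(y))$ by at most $\varepsilon$. With this single modification your argument closes and coincides with the paper's proof; without it, the lower bound is not established.
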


As a consequence of Theorem \ref{thm:main}, if $G$ is residually finite, every metric space with an isometric $G$-action embeds isometrically into a metric ultraproduct of finite isometric $G$-actions.

Our notion of local finite approximation is reminiscent of Rosendal's theory of
finitely approximable groups and actions. In \cite{RosendalRZ} he introduced the
Ribes--Zalesskii property (RZ) for a discrete group $G$ and showed that a group has
this property if and only if every isometric action of it on a metric space admits
\emph{finite approximations without $\varepsilon$}, in the sense that finite
configurations can be realized exactly inside finite $G$-spaces.

By contrast, Theorem~\ref{thm:main} shows that for residually finite groups one
always has \emph{approximate} local finite models for arbitrary isometric
actions, but we allow a small error~$\varepsilon$ on a prescribed finite
configuration. In particular, residual finiteness is sufficient for the
existence of such $\varepsilon$-models, whereas Rosendal's exact finite approximability characterizes the strictly smaller class of groups with
property~(RZ).

\section{Proof of the main result}

The following theorem is the main result. Recall, a group $G$ is \emph{residually finite} if for every $g\in G\setminus\{e\}$ there exists a finite group $Q$ and a homomorphism $\psi\colon G\to Q$ with $\psi(g)\neq e$. The proof is inspired
by Doucha's work on metric approximation of topological groups,
see in particular~\cite[Proposition 3.9]{DouchaMetricTopGroups}.

\begin{proof}[Proof of Theorem \ref{thm:main}]
Without loss of generality we may assume that $A=A^{-1}$ and $e \in A$.
Let $X_0\subset X$ be finite and set $Z:=G \times X_0.$
Define a pseudometric $\kappa$ on $Z$ by
$\kappa((g,x),(h,y)) := d(\varphi_g(x),\varphi_h(y)).$
Because the action is isometric, $\kappa$ is left $G$-invariant in the first coordinate, i.e.,
$\kappa((kg,x),(kh,y)) = \kappa((g,x),(h,y))$ for all $k,g,h\in G,\ x,y\in X_0$.
Let $d_{\rm st}\colon Z\times Z\to\{0,1\}$ be the discrete metric
\[
d_{\rm st}((g,x),(h,y)) =
\begin{cases}
0 &\text{if } (g,x)=(h,y),\\
1 &\text{otherwise.}
\end{cases}
\]
Define
\[
d_{\varepsilon}((g,x),(h,y)):=\kappa((g,x),(h,y)) + \varepsilon d_{\rm st}((g,x),(h,y)).
\]
Then $d_\varepsilon$ is a genuine metric on $Z$, still left-invariant in the first coordinate. In particular, for $(g,x)\neq(h,y)$ we have $d_\varepsilon((g,x),(h,y))\ge\varepsilon$.
Set $m:= \max \{d_{\varepsilon}((g,x),(h,y)) \mid x,y \in X_0,\ g,h \in A\}$
and choose $k\in \mathbb N$ with $k \geq 2\varepsilon^{-1}m.$
Now, using that $G$ is residually finite, fix a finite quotient
$ \pi\colon G\to H$
such that $\pi$ is injective on the finite set $B:=A^{k}$. Let $H_0:=\langle \pi(A)\rangle\le H$ be the subgroup generated by $\pi(A)$.

We set $Y_0:=H_0\times X_0$ and define a weighted graph structure on $Y_0$ as follows: we declare that there is an edge between the vertices
$(q\pi(g),x)$ and $(q\pi(h),y)$, for $q\in H_0$, $g,h\in A$, $x,y\in X_0$, with weight
$w\bigl((q\pi(g),x),(q\pi(h),y)\bigr)
:= d_\varepsilon((g,x),(h,y)).$

Let $\eta$ be the induced path metric on $Y_0$: for any two vertices, $\eta$ is the infimum of the sums of edge weights over all finite paths joining them. By construction, left multiplication by an element of $H_0$ on the first coordinate permutes vertices and preserves edge weights, so the action
$H_0\curvearrowright Y_0$, $h\cdot(q,x):=(hq,x)$,
is by isometries. We can extend this metric to $Y:=H \times X_0$ in a $H$-invariant way that sets all distances which are not fixed by $H$-invariance equal to a large constant. This way, 
we obtain an isometric action of  $G$ on $(Y,\eta)$ by
$\psi_g(q,x):=(\pi(g)q,x).$ Finally, define $f\colon X_0\to Y$ by $f(x):=(e,x)$ where $e\in H$ is the identity.

We need to compare $\eta$ and $d_\varepsilon$ on $A\times X_0$. We now show that for all $g,h\in A$, $x,y\in X_0$,
\begin{equation}\label{eq:eta-deps}
\eta\bigl(\psi_g(f(x)),\psi_h(f(y))\bigr)
= d_\varepsilon((g,x),(h,y)).
\end{equation}
This amounts to proving
\[
\eta\bigl((\pi(g),x),(\pi(h),y)\bigr) = d_\varepsilon((g,x),(h,y)).
\]

For $g,h\in A$ and $x,y\in X_0$ there is an edge between $(\pi(g),x)$ and $(\pi(h),y)$ corresponding to $q=e$, so
\[
\eta\bigl((\pi(g),x),(\pi(h),y)\bigr)
\le w\bigl((\pi(g),x),(\pi(h),y)\bigr)
= d_\varepsilon((g,x),(h,y)).
\]
We prove the reverse inequality by contradiction. Fix $g,h\in A$, $x,y\in X_0$ and suppose that
\begin{equation}\label{eq:contr}
\eta\bigl((\pi(g),x),(\pi(h),y)\bigr)
< d_\varepsilon((g,x),(h,y)).
\end{equation}
Let
\[
(\pi(g),x)=(v_0,x_0), (v_1,x_1),\dots,(v_n,x_n)=(\pi(h),y)
\]
be a path in $Y$ with minimal number $n$ of edges and such that
\[
\eta\bigl((\pi(g),x),(\pi(h),y)\bigr)
= \sum_{j=1}^n w_j,
\]
where $w_j$ is the weight of the $j$-th edge. Each edge comes from some triple $q_j\in H_0,g_j,h_j\in A$,
and has the form
\[
(v_{j-1},x_{j-1})=(q_j\pi(g_j),x_{j-1}),\qquad
(v_j,x_j)=(q_j\pi(h_j),x_j),
\]
with $w_j = d_\varepsilon((g_j,x_{j-1}),(h_j,x_j))$.
Since no edge is trivial, we have $w_j\ge\varepsilon$, hence
\[
\varepsilon n \le \sum_{j=1}^n w_j
= \eta\bigl((\pi(g),x),(\pi(h),y)\bigr)
< d_\varepsilon((g,x),(h,y))
\le m,
\]
and therefore $n < \varepsilon^{-1}m \le k/2$.
Next, consider the elements
\[
u := g^{-1}h,\qquad
u' := \prod_{j=1}^n g_j^{-1}h_j \in G.
\]
We claim that
\begin{equation}\label{eq:uuprime}
\pi(u) = \pi(u').
\end{equation}
Indeed, from $v_{j-1}=q_j\pi(g_j)$ and $v_j=q_j\pi(h_j)$ we get
$v_{j-1}^{-1}v_j = \pi(g_j^{-1}h_j),$
and hence
\[
v_n
= v_0 \prod_{j=1}^n v_{j-1}^{-1}v_j
= \pi(g)\,\prod_{j=1}^n \pi(g_j^{-1}h_j)
= \pi\Bigl(g \prod_{j=1}^n g_j^{-1}h_j\Bigr)
= \pi(gu').
\]
Since $v_n=\pi(h)$, this gives $\pi(gu')=\pi(h)$, i.e.~$\pi(u')=\pi(g^{-1}h)=\pi(u)$.

Now observe that $u = g^{-1}h \in A^2 \subset A^{k}=B$,
and each $g_j^{-1}h_j\in A^2$, so $u'=\prod_{j=1}^n g_j^{-1}h_j$ lies in $A^{2n}\subset A^{k}=B$, because $n\le k/2$. Thus $u,u'\in B$ and, by injectivity of $\pi$ on $B$, \eqref{eq:uuprime} implies $u'=u$, i.e.
\begin{equation}\label{eq:gprod= h}
g\prod_{j=1}^n g_j^{-1}h_j = h.
\end{equation}

Define elements $b_0,\dots,b_n\in G$ recursively by
$b_0:=g, b_j := b_{j-1}g_j^{-1}h_j\quad (1\le j\le n).$ Then by construction
\[
b_n = g\prod_{j=1}^n g_j^{-1}h_j = h
\]
by \eqref{eq:gprod= h}. We now compare $w_j$ with $d_\varepsilon$ along the sequence $(b_j,x_j)$. For each $j$ we have
$b_{j-1}^{-1}b_j = g_j^{-1}h_j.$ Using left-invariance of $d_\varepsilon$ we obtain
\begin{align*}
d_\varepsilon((b_{j-1},x_{j-1}),(b_j,x_j))
&= d_\varepsilon\bigl((e,x_{j-1}),(b_{j-1}^{-1}b_j,x_j)\bigr)\\
&= d_\varepsilon\bigl((e,x_{j-1}),(g_j^{-1}h_j,x_j)\bigr)\\
&= d_\varepsilon((g_j,x_{j-1}),(h_j,x_j))\\
&= w_j.
\end{align*}
Therefore
\[
\sum_{j=1}^n w_j
= \sum_{j=1}^n d_\varepsilon((b_{j-1},x_{j-1}),(b_j,x_j)).
\]
By the triangle inequality for $d_\varepsilon$ and the facts that $(b_0,x_0)=(g,x)$ and $(b_n,x_n)=(h,y)$, we get
\[
d_\varepsilon((g,x),(h,y))
\le \sum_{j=1}^n d_\varepsilon((b_{j-1},x_{j-1}),(b_j,x_j))
= \sum_{j=1}^n w_j
= \eta\bigl((\pi(g),x),(\pi(h),y)\bigr),
\]
which contradicts our assumption \eqref{eq:contr}. Hence no such path exists, and we must have
\[
\eta\bigl((\pi(g),x),(\pi(h),y)\bigr)
= d_\varepsilon((g,x),(h,y)).
\]
Finally, for $g,h\in A$ and $x,y\in X_0$ we have
\[
\eta(\psi_g(f(x)),\psi_h(f(y)))
= d_\varepsilon((g,x),(h,y))
= d(\varphi_g(x),\varphi_h(y)) + \varepsilon\, d_{\rm st}((g,x),(h,y)),
\]
and $d_{\rm st}\in\{0,1\}$, so
\[
\bigl|\eta(\psi_g(f(x)),\psi_h(f(y))) - d(\varphi_g(x),\varphi_h(y))\bigr|
\le \varepsilon.
\]
This completes the proof.
\end{proof}

A \emph{seminorm} on $G$ is a function $s\colon G\to[0,\infty)$ such that
$s(e)=0$, $s(g^{-1})=s(g)$, and $s(gh)\le s(g)+s(h)$ for all $g,h\in G$. A seminorm $s$ is called a \emph{norm} if $s(g)=0$ implies $g=e$.  A group together with a norm is called \emph{normed group}.
Define the \emph{standard} norm $n_{\mathrm{std}}\colon G\to\{0,1\}$ by
\[
n_{\mathrm{std}}(g) :=
\begin{cases}
0 & g=e,\\
1 & g\neq e.
\end{cases}
\]
There is a well-known correspondence between left-$G$-invariant metrics on $G$ and norms on $G$. Thus, the following corollary is immediate.

\begin{corollary}
\label{thm:coro}
Let $G$ be a residually finite group and let $s\colon G\to[0,\infty)$ be a seminorm.
Then, for every $A \subset G$ finite and $\varepsilon>0$, there exist a finite normed group $(H,\rho)$, and a surjective homomorphism $\varphi\colon G\to H$,
such that
\[
|\rho(\varphi(g)) - s(g)| \le \varepsilon
\qquad\text{for all }g\in A.
\]
\end{corollary}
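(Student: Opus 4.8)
The plan is to deduce the corollary from Theorem~\ref{thm:main} applied to the left-translation action of $G$ on itself. Assume without loss of generality that $A=A^{-1}$ and $e\in A$. Let $d_s\colon G\times G\to[0,\infty)$, $d_s(g,h):=s(g^{-1}h)$, be the left-$G$-invariant pseudometric associated to $s$ under the standard correspondence; then $\varphi_g(x):=gx$ defines an isometric action $G\curvearrowright(G,d_s)$. (The proof of Theorem~\ref{thm:main} only uses that $\kappa$ is a pseudometric, so it applies verbatim here; alternatively one may first replace $s$ by the norm $s+\tfrac{\varepsilon}{2}\,n_{\mathrm{std}}$ and run the argument below with $\varepsilon/2$.)

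Now apply Theorem~\ref{thm:main} with $X=G$, this action, the singleton $X_0=\{e\}$, the finite set $A$, and error $\varepsilon$. I will not use the theorem as a black box but inspect its proof in this special case: when $X_0=\{x_0\}$ is a single point, the space constructed there is $Y=H\times\{x_0\}\cong H$ for the chosen finite quotient $\pi\colon G\to H$, the action becomes $\psi_g(q)=\pi(g)q$, i.e.\ left translation by $\pi(g)$, the metric $\eta$ is genuine and left-$H$-invariant, and $f(x_0)=e_H$. By the correspondence between left-invariant metrics and norms, $\rho(q):=\eta(e_H,q)$ is then a norm on $H$, so that $(H,\rho)$ is a finite normed group and $\varphi:=\pi\colon G\to H$ is a surjective homomorphism.

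It remains to read off the estimate. Specializing the conclusion of Theorem~\ref{thm:main} (equivalently, equation~\eqref{eq:eta-deps}) to $x=y=x_0$, $g=e\in A$, and arbitrary $h\in A$, and using $\psi_e(f(x_0))=e_H$ and $\psi_h(f(x_0))=\pi(h)=\varphi(h)$, we get
\[
\bigl|\,d_s(e,h)-\eta\bigl(e_H,\varphi(h)\bigr)\,\bigr|\le\varepsilon,
\qquad\text{that is}\qquad
\bigl|\,s(h)-\rho(\varphi(h))\,\bigr|\le\varepsilon,
\]
for all $h\in A$, which is exactly the claim.

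The only genuine point requiring care — and the reason this is not literally a one-line substitution into Theorem~\ref{thm:main} — is that the theorem outputs an abstract finite isometric $G$-space, whereas the corollary demands a finite \emph{normed group} together with a surjective homomorphism. This is resolved by the observation above that for a one-point $X_0$ the space built in the proof is the finite quotient $H$ itself with $G$ acting through left translation; transitivity of this action, which holds because $\pi$ is onto, together with left-invariance of $\eta$, is precisely what makes the norm $\rho$ well defined. Beyond this, everything is routine bookkeeping.
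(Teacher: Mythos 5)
Your proof is correct and takes essentially the same route the paper intends: the paper declares the corollary immediate from Theorem \ref{thm:main} via the correspondence between left-$G$-invariant metrics and norms, which is exactly your specialization to the left-translation action on $(G,d_s)$ with $X_0=\{e\}$. The extra details you supply --- handling the fact that $d_s$ is only a pseudometric, and observing that for a one-point $X_0$ the space built in the proof is the finite quotient $H$ itself with a genuine left-invariant metric, so that $\rho(q):=\eta(e_H,q)$ is a norm and $\varphi:=\pi$ is surjective --- correctly fill in what the paper leaves implicit.
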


Proposition 3.9 in \cite{DouchaMetricTopGroups} states the previous corollary in spirit for free groups, while the proof did not use freeness. Without going into the details, see for example \cite{BYBHU}, let us also record the following corollary:
\begin{corollary}
Let $G$ be a residually finite group. Every isometric action $G \curvearrowright (X,d)$ embeds isometrically into a metric ultraproduct of finite isometric $G$-actions.
\end{corollary}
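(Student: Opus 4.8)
The plan is to run the standard ``approximation implies ultraproduct embedding'' argument, feeding Theorem~\ref{thm:main} into it along the directed family of finite configurations. We may assume $X\neq\emptyset$ and fix a basepoint $x_0\in X$. Let $\mathcal I$ be the set of triples $i=(X_0,A,n)$ with $X_0\subseteq X$ finite and $x_0\in X_0$, with $A\subseteq G$ finite, $A=A^{-1}$, $e\in A$, and with $n\in\mathbb N$; order it by declaring $(X_0,A,n)\preceq (X_0',A',n')$ if $X_0\subseteq X_0'$, $A\subseteq A'$ and $n\le n'$. This order is directed. For each $i=(X_0,A,n)\in\mathcal I$, Theorem~\ref{thm:main} with $\varepsilon=1/n$ produces a finite metric space $(Y_i,\eta_i)$, an isometric action $G\curvearrowright_{\psi^i}(Y_i,\eta_i)$, and a map $f_i\colon X_0\to Y_i$ with
\[
\bigl|d(\varphi_g(x),\varphi_h(y))-\eta_i\bigl(\psi^i_g(f_i(x)),\psi^i_h(f_i(y))\bigr)\bigr|\le \tfrac1n,\qquad g,h\in A,\ x,y\in X_0.
\]
Since $\mathcal I$ is directed, the tails $T_{i_0}=\{i\in\mathcal I:i\succeq i_0\}$ have the finite intersection property, so fix an ultrafilter $\omega$ on $\mathcal I$ with $T_{i_0}\in\omega$ for all $i_0$. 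Then any up-set that requires finitely many prescribed elements of $G$ to lie in $A$ and finitely many prescribed points of $X$ to lie in $X_0$, together with a lower bound $n\ge m$, belongs to $\omega$, since it contains a tail.

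Next, form the pointed metric ultraproduct of the finite metric spaces $(Y_i,\eta_i)$ along $\omega$, with basepoints $p_i:=f_i(x_0)$, in the sense of ultraproducts of metric structures (see~\cite{BYBHU}): let $W$ be the set of sequences $(z_i)_i\in\prod_i Y_i$ for which $\eta_i(z_i,p_i)$ stays bounded on some set in $\omega$, equip $W$ with the pseudometric $(z_i),(z_i')\mapsto\lim_\omega\eta_i(z_i,z_i')$, and let $Y_\omega$ be the associated metric space. The diagonal maps $(z_i)_i\mapsto(\psi^i_g z_i)_i$ descend to isometries $\psi_g$ of $Y_\omega$: they clearly preserve the pseudometric, and they preserve $W$ because on $\{(X_0,A,n):g\in A\}\in\omega$ Theorem~\ref{thm:main} applied to the pair $(g,e)$ at $x=y=x_0$ gives $\eta_i(\psi^i_g p_i,p_i)\le d(\varphi_g(x_0),x_0)+1$, hence $\eta_i(\psi^i_g z_i,p_i)\le \eta_i(\psi^i_g z_i,\psi^i_g p_i)+\eta_i(\psi^i_g p_i,p_i)=\eta_i(z_i,p_i)+\eta_i(\psi^i_g p_i,p_i)$ is bounded on a set in $\omega$ whenever $(z_i)_i\in W$. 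Thus $G$ acts on $Y_\omega$ by isometries, and $Y_\omega$ is a metric ultraproduct of finite isometric $G$-actions. This boundedness point is the only real obstacle: the spaces $(Y_i,\eta_i)$ are not uniformly bounded — Theorem~\ref{thm:main} introduces an index-dependent ``large constant'' — so one must pass to the pointed ultraproduct anchored at the $f_i(x_0)$ rather than the naive one.

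Finally, define $\iota\colon X\to Y_\omega$ by $\iota(x):=\bigl[(f_i(x))_i\bigr]$, using the convention $f_i(x):=p_i$ on the negligible set of $i=(X_0,A,n)$ with $x\notin X_0$. On $\{i:x\in X_0\}\in\omega$, Theorem~\ref{thm:main} with $g=h=e$ gives $|\eta_i(f_i(x),p_i)-d(x,x_0)|\le 1/n\le 1$, so $(f_i(x))_i\in W$ and $\iota$ is well defined. For $x,y\in X$ and every $m$, the bound $|\eta_i(f_i(x),f_i(y))-d(x,y)|\le 1/n$ holds on the $\omega$-large set $\{(X_0,A,n):x,y\in X_0,\ n\ge m\}$, whence $\lim_\omega\eta_i(f_i(x),f_i(y))=d(x,y)$, so $\iota$ is an isometric embedding. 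For $G$-equivariance, fix $g\in G$ and $x\in X$; applying Theorem~\ref{thm:main} to the pair $(g,e)$ at the points $(x,\varphi_g(x))$ on $\{(X_0,A,n):g\in A,\ x,\varphi_g(x)\in X_0,\ n\ge m\}\in\omega$ gives
\[
\eta_i\bigl(\psi^i_g f_i(x),f_i(\varphi_g(x))\bigr)\le d\bigl(\varphi_g(x),\varphi_g(x)\bigr)+\tfrac1n=\tfrac1n,
\]
so $\psi_g(\iota(x))=\iota(\varphi_g(x))$. Hence $\iota$ is an isometric, $G$-equivariant embedding of $(X,d)$ into the metric ultraproduct $Y_\omega$ of finite isometric $G$-actions, as claimed.
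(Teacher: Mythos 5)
Your argument is correct and is exactly the standard ultraproduct argument that the paper has in mind when it records this corollary ``without going into the details'' via the reference to \cite{BYBHU}: a directed family of finite configurations, an ultrafilter containing all tails, and Theorem~\ref{thm:main} with $\varepsilon=1/n$ feeding the isometry and equivariance estimates. Your extra care with the pointed ultraproduct anchored at $f_i(x_0)$ is a sensible way to handle the fact that the spaces $(Y_i,\eta_i)$ need not be uniformly bounded, and all the verifications (well-definedness, isometry, $G$-equivariance) go through as you state them.
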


\section*{Acknowledgments}

The second author thanks the Isaac Newton Institute for its hospitality during the workshop \emph{Stability and probabilistic methods} in November 2025. ChatGPT was used to assist in drafting parts of this manuscript. All content was reviewed and substantially revised by the authors, who are responsible for the final text. We thank Michal Doucha for interesting remarks on a first draft of this paper.

\end{document}